\title{} \author{} \date{}
\numberwithin{equation}{section} %You may omit this line if you want numbering as (1)
\newcommand{\Cl}{\mathop{\mathrm {Cl}}\nolimits}
\newcommand{\Int}{\mathop{\mathrm {Int}}\nolimits}
 \newtheorem{thm}{Theorem}
 \newtheorem{cor}{Corollary}
 \newtheorem{lem}{Lemma}
 \newtheorem{prop}{Proposition}
 \theoremstyle{definition}
 \newtheorem{defn}{Definition}
  \newtheorem{ex}{Example}
 \theoremstyle{remark}
 \newtheorem{rem}{Remark}
\newcommand\cyr
	\renewcommand\rmdefault{wncyr}
	\renewcommand\sfdefault{wncyss}
	\renewcommand\encodingdefault{OT2}
\DeclareTextFontCommand{\textcyr}{\cyr}
\begin{document}
\thispagestyle{empty}
% PRAVLJENJE NASLOVA
\begin{center}
{\large \bf On topology expansion using ideals

} \vspace*{3mm}

{\bf Anika Njamcul\footnote{Department of Mathematics and Informatics, Faculty of Sciences, University of Novi Sad, Serbia, \\ \href{https://orcid.org/0000-0001-5707-7074}{orcid.org/0000-0001-5707-7074}, e-mail: \href{mailto:anika.njamcul@dmi.uns.ac.rs}{anika.njamcul@dmi.uns.ac.rs}}}
and
{\bf Aleksandar Pavlovi\'c\footnote{Department of Mathematics and Informatics, Faculty of Sciences, University of @Novi Sad, Serbia, \\ \href{https://orcid.org/0000-0001-5001-6869}{orcid.org/0000-0001-5001-6869}, e-mail: \href{mailto:apavlovic@dmi.uns.ac.rs}{apavlovic@dmi.uns.ac.rs}}}
\end{center}

\begin{abstract}

 Topologies can be expanded with the help of ideals, using the local function, an operator resembling the closure of a set.

 The aim of this paper is  to  define  the ideals  which enable us to create  this topology $\tau^{*}$ on $X$ simultaneously making a specific set $A\subseteq X$ open in $\tau^{*}$. We study certain properties of $\tau^{*}$, especially under the assumption that $A$ is a preopen set. Further, we reflect on the ideal topological space in which the ideal is generated by a chosen family of dense sets. Here we prove that the generated topology by this ideal is submaximal, but not maximal connected.\\

 {\it AMS Mathematics  Subject Classification $(2020)$}:
54A10, %Several topologies on one set (change of topology, comparison of topologies, lattices of topologies)
54A05,  %Topological spaces and generalizations (closure spaces, etc.)
54B99, %None of the above, but in this section
54D05, %Connected and locally connected spaces (general aspects)
54D35, %Extensions of spaces (compactifications, supercompactifications, completions, etc.)
54E99 %None of the above, but in this section
\\[1mm] {\it Key words and phrases:} ideal topological space, local function, dense sets, regular open sets, semiregularization, submaximal space,  connectedness, maximal topology

\end{abstract}

\section{Introduction}

The concept of the local function in general topology is well known since the work of Kuratovski (\cite{KURold}) in the first half of the twentieth century.
A more detailed study of the local function and its properties considering different ideals (the ideal of finite sets, codense sets, discrete sets, etc.) was given in 1944 by Vaidyanathaswamy  \cite{Vaid}.
The new topology gained by the local function was further studied by Hayashi \cite{Hay} in 1964 and  Nj\.{a}stad \cite{NJAST} in 1966, later by Samuels \cite{Sam} in 1975  and many others.
 For a period of time, this topic was mostly bypassed in research until the  paper by Jankovi\' c and Hamlett in 1990  \cite{JH}. In that paper, along with some new results,  they summarised all the known facts about this topic.

Thereafter, many papers concerning this subject were published. For example, Ekici defined I-Alexandroff ideal topological spaces in \cite{ERD}, while  Al-Omari and Noiri \cite{ON} in 2013  devoted their attention to  investigating the generalisation of $\theta$-closure by ideals, and they  defined the local closure function. Others (like Hatir \cite{HAT}) studied the properties of continuous  functions defined on ideal topological spaces.

Most of these approaches to the topic are similar - either the definition of the local function is adjusted, thus creating a new topology $\tau^{*} $ with various characteristics, or different more or less known ideals are considered together with the resulting topology.

In this paper we considered a less common approach.
 We start with the question of how to define an ideal so that the given set $A\subseteq X$ would become open in $\tau^{*}$. This is successfully done and it is shown that this method in some cases can not give minimal extension of a topology. We further prove that if the set $A$ in question is preopen, regular-open sets are preserved during this topology expansion.
 Also, in the special case of this construction, we obtain the result combining preopen sets and connectedness of the topologies $\tau$ and $\tau^{*}$.

 Since dense sets have given us interesting results when used in constructing ideals, in the second part of our work we consider the ideal obtained by a family of dense sets such that every finite intersection in this family is again dense in $\tau$. It turns out that this construction can give us important results concerning maximal topologies.

 In 1943 Hewitt \cite{HEW} studied maximal irresolvable (MI) spaces as spaces without isolated points in which every dense set is open. Amongst other things, he proved that every submaximal space  is an MI space. Later, submaximal spaces were studied as ones where every dense set is open (see \cite{ARH}).  Mioduszewski  and Rudolf  \cite{MR} investigated how regular open sets are preserved in topology expansions. They defined  r.o. maximal spaces and proved that a space is r.o. maximal if and only if every dense subset is open.

 We prove that the topology $\tau^{*}$ in the case of the ideal defined using the family of dense sets is submaximal, although not maximal connected.

 Maximal connected topology was principally investigated by Thomas in 1968. (see \cite{THO}) and many others thereafter.

\section{Preliminaries}

To begin with, let us state the notation we will use and some basic definitions and theorems.

  We will consider  $(X,\tau)$ to be  a topological space and $\tau(x)$  will denote the neighbourhood  system of the point $x\in X$. It is important to state that, for now, no additional properties of the topology are presumed.  The closure (resp. the interior) of a set $A\subseteq  X$ will be denoted by $\Cl(A)$ (resp. $\Int(A)$).

Considering  a nonempty family $\mathcal{I}$ of subsets of $X$,  $\mathcal{I}$ is an ideal on $X$ if the union of two sets from $\mathcal{I}$ is in $\mathcal{I}$ and if every subset of $A\in\mathcal{I}$ is again in $\mathcal{I}$.

If $(X,\tau)$ is a topological space and $\mathcal{I}$ an ideal on $X$, we say that $(X,\tau,\mathcal{I})$ is an ideal topological space.\\

A \textbf{local function} of  the set $A\subseteq X$ with respect to the topology $\tau$ and an ideal $\mathcal{I}$ both defined on $X$ is represented in the following way (see \cite{Veli}):

$$A^{*}(\tau,\mathcal{I})=\{x\in X:U\cap A\notin \mathcal{I}\mbox{ for every }U\in\tau(x)\}.$$

With the local function, a closure operator is defined as  $$\Cl^{*}(A)=A\cup A^{*},$$ and therefore a new topology $\tau^{*}$, as a topology with the closure operator $\Cl^*$,  is obtained.
  It is clear that  a set $F$ is closed in $\tau^{*}$ if and only if $F^*\subseteq F$, and, therefore, $U$ is open in $\tau^{*}$ if and  only if $U\subseteq X\setminus (X\setminus U)^{*}$ (see \cite{JH}). Also, let us notice that all sets in $\mathcal{I}$ are closed in $\tau^{*}$.

It can be shown  that the collection $$\beta(\mathcal{I},\tau)=\{V\setminus I:V\in\tau,I \in \mathcal{I}\}$$
 is a basis for $\tau^{*}$.

Working with ideal topological spaces, the notion of compatibility is of great significance.

\begin{defn}\cite{Vaid}
	Let $(X,\tau,\mathcal{I})$ be an ideal topological space. We say that topology $\tau$ is \textbf{compatible} with the ideal $\mathcal{I}$, denoted $\tau \sim \mathcal{I}$, if the following holds for every $A\subseteq X$: if for every $x\in A$ there exists a $U\in\tau(x)$ such that $U\cap A\in \mathcal{I}$, then $A\in\mathcal{I}$.
\end{defn}

Let us notice that if $\mathcal{I}$ is a principal ideal, then topology $\tau$ is trivially {compatible} with the ideal $\mathcal{I}$. Also, it is known (see \cite{JH}) that in the case of compatibility of a topology $\tau$ and an ideal $\mathcal{I}$, family $\beta(\mathcal{I},\tau)$ is not just a base of $\tau^*$, but equal to it.

%In the paper of Hamlett and Jankovi\' c \cite{JH}, several equivalent conditions for compatibility are mentioned in Theorem 4.5. In proving one of our results, we will use the following.

%\begin{thm}\cite{JH}
%	Let $(X,\tau,\mathcal{I})$ be an ideal topological space. Then 	$\tau\sim \mathcal{I}$ if and only if for every $B\subseteq X$, if $B$ has a cover of open sets each of whose intersection with $B$ is in $\mathcal{I}$, then $B$ is in $\mathcal{I}$.
%\end{thm}

If a topological space $(X,\tau)$ is given, a set $U\in\tau$ is said to be regular open if $U=\Int(\Cl(U))$. It is well known that the collection of all regular open sets in $\tau$  forms a basis for a topology $\tau_s$ on $X$, which is also called the \textbf{semiregularization} of $\tau$. If we consider  all topologies on $X$ with the same family of regular open sets, it is said that these topologies are r.o. equivalent \cite{MR}. Also, a topology $\tau$ on $X$ is said to be r.o. maximal \cite{MR} if for every r.o. equivalent topology $\tau'$ on $X$ such that $\tau\subseteq \tau'$ we have $\tau=\tau'$.

Recall that a  topological space $(X,\tau)$ is connected if and only if it can not be represented as the union of two non-empty, disjoint open subsets.

A set $A\subseteq X$ is preopen (see \cite{GAN}, \cite{DONT}) if and only if $A\subseteq \Int(\Cl(A))$. A  topological space $(X,\tau)$ is \textbf{submaximal} iff every preopen set is open. It is known (see \cite[p.139]{Bourbaki}) that submaximal spaces are r.o. maximal and vice versa.

\section{ Simple topology expansion }
Firstly, we will address the following problem: if a topological space $(X,\tau)$ and an arbitrary set $A\subseteq X$ are given, how can we define an ideal $\mathcal{I}_A$ such that $A$ is open in the  topological space $\langle X, \tau^{*}\rangle$ generated by $\mathcal{I}_A$?

There are always two  easy solutions for this problem. The first one is to take $P(X)$ for a required ideal, but then we obtain discrete topology. The other solution is to take  $\{I: I \subseteq X \setminus A\}$ for a required  ideal, that is the principal ideal obtained by $X \setminus A$. But this solution makes every subset of $X\setminus A$ closed, that is, every superset of $A$ open. Both solutions are much larger than   the  simplest topology containing $\tau$, and $A$, the topology with the subbase $\tau \cup \{A\}$,  usually denoted by $\tau(A)$, which is, in the lattice of topological spaces, the   supremum of $\tau$ and atomic topology $\{\emptyset, A, X\}$ (see \cite{Cam71}).

So, is there a "small" solution obtained by ideal?

It is necessary to assume that  the set $A$ in question is not open in $\tau $.  Obviously, if $A$ is open in $\tau$, by definition, $A$ is also open in $\tau^{*}$, and the required ideal is $\{\emptyset\}$. Therefore, in this section we will assume that $A\notin \tau$.

In order to define the ideal which will "make $A$ open" in $\tau^{*}$, for each $x\in A$ we will take its arbitrary neighbourhood $U_x$ (van Douwen called this "a neighbourhood assignment" in \cite{vDPff}).

 The family of sets $$\{U_x\setminus A:x\in A\}$$ will generate the ideal $\mathcal{I}_A$ on $X$ by adding to it   all the subsets of the sets in the given family and their finite unions.

 Note that if $x\in \Int(A)$, then there exists $U\in\tau(x)$ such that $U\setminus A=\emptyset$. This is why  we can instead use   the ideal generated by the family  $$\{U_{x}\setminus A:x\in A\setminus \Int(A)\}.$$

 Let us prove that this is one answer to our problem.

 \begin{prop}
 	If $(X,\tau, \mathcal{I}_A)$ is an ideal topological space, then $A\in\tau^{*}$.
 \end{prop}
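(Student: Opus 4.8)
The plan is to verify directly the openness criterion recorded in the preliminaries, namely that a set $U$ is open in $\tau^{*}$ if and only if $U\subseteq X\setminus(X\setminus U)^{*}$. Applying this with $U=A$, it suffices to show that $A\cap (X\setminus A)^{*}=\emptyset$, i.e.\ that no point of $A$ lies in the local function of its complement. Unpacking the definition, a point $x$ belongs to $(X\setminus A)^{*}$ precisely when $V\cap(X\setminus A)\notin\mathcal{I}_A$ for every $V\in\tau(x)$; so to place $x\notin(X\setminus A)^{*}$ it is enough to exhibit a single neighbourhood $V$ of $x$ with $V\cap(X\setminus A)\in\mathcal{I}_A$.

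The key step is the choice of this neighbourhood, and here the neighbourhood assignment $x\mapsto U_x$ does all the work. For $x\in A$ I would take $V=U_x$ and compute
$$ U_x\cap(X\setminus A)=U_x\setminus A, $$
which is by construction one of the generating sets of $\mathcal{I}_A$, hence an element of $\mathcal{I}_A$. Thus $x\notin(X\setminus A)^{*}$. If instead one works with the reduced family indexed by $A\setminus\Int(A)$, the same conclusion holds: for $x\in A\setminus\Int(A)$ the set $U_x\setminus A$ is again a generator, while for $x\in\Int(A)$ one may simply replace $U_x$ by a neighbourhood contained in $A$, so that $U_x\setminus A=\emptyset\in\mathcal{I}_A$.

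Since $x\in A$ was arbitrary, this gives $A\cap(X\setminus A)^{*}=\emptyset$, equivalently $A\subseteq X\setminus(X\setminus A)^{*}$, and the openness criterion yields $A\in\tau^{*}$. I do not anticipate a genuine obstacle, as the statement is essentially a direct verification built into the definition of $\mathcal{I}_A$; the only point requiring a little care is the uniform treatment of the interior points of $A$ when the smaller generating family is used, which is settled by choosing a neighbourhood lying inside $A$.
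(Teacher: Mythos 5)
Your proposal is correct and follows essentially the same route as the paper: both arguments show that no point of $A$ can lie in $(X\setminus A)^{*}$ by producing, for each $x\in A$, a neighbourhood whose intersection with $X\setminus A$ belongs to $\mathcal{I}_A$ (the assigned $U_x$ for $x\in A\setminus\Int(A)$, and a neighbourhood inside $A$ for $x\in\Int(A)$). The only difference is cosmetic: the paper phrases this as $(X\setminus A)^{*}\subseteq X\setminus A$, i.e.\ closedness of the complement, while you invoke the equivalent openness criterion $A\subseteq X\setminus(X\setminus A)^{*}$.
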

 \begin{proof}
 	We will prove that the complement of $A$ is closed in $\tau^{*}$ by showing that  $(X\setminus A)^{*}\subseteq X\setminus A$. Let  $x\in (X\setminus A)^{*}$. By definition, for every $O\in \tau(x)$ is $O\cap (X\setminus A)\notin \mathcal{I}_A$ or, equivalently, $O\setminus A\notin\mathcal{I}_A$. Assume $x\in A$. If $x\in \Int(A)$,  then $\Int(A)\setminus A=\emptyset\in\mathcal{I}_A$. On the other hand, if $x\in A \setminus \Int(A)$, then for any $O\in\tau(x)$ we have $(O\cap U_x)\setminus A\subset U_x \setminus A\in\mathcal{I}_{A}$, which contradicts $x\in (X\setminus A)^{*}$. Thus, $x\in X\setminus A$.
 \end{proof}

 Therefore, using the ideal $\mathcal{I}_A$ we have constructed an ideal topological space in which  the chosen set $A$ is open.

%

%An important result from the given construction of $\mathcal{I}_A$ is the compatibility of the ideal with any topology $\tau$, as we prove in the next theorem.

%ALERT verovatno dokaz nije dobar

%\begin{thm}
	%If $(X,\tau,\mathcal{I}_A)$ is an ideal topological space, then $\tau \sim \mathcal{I}_A$.
%\end{thm}
%\begin{proof}
%	Let $B\subseteq X$ and let us assume that there is an open cover $\{O_i\}_{i\in I}$ of $B$ such that for every $i\in I$, $O_i\cap B\in\mathcal{I}_A$. We will %show that $B\in\mathcal{I}_A$.
	
%	Since $B\subseteq \bigcup_{i\in I}O_i$, we have $B=B\cap \bigcup_{i\in I}O_i =\bigcup_{i\in I}(B\cap O_i)$. By our assumption, $O_i\cap B\in\mathcal{I}_A$, %so for each $i\in I$ there is a finite set $J_i\subseteq I$ such that $O_i\cap B=\bigcup_{j\in J_i}(U_{x_{j}}\setminus A)$, where $U_{x_{j}}\in\tau (x_j)$, %$x_j\in X$. Now we have $B=\bigcup_{i\in I}(B\cap O_i)=\bigcup_{i\in I}(\bigcup _{j\in J_i} (U_{x_{j}}\setminus A))=\bigcup_{i\in I}(\bigcup_{j\in %J_i}U_{x_{j}})\setminus A$ and this set is in $\mathcal{I}_A$.
%\end{proof}

%By proving that $\mathcal{I}_A$ is compatible with $\tau$, we can use many known results, such as the fact (\cite{JH}) that if $\tau\sim \mathcal{I}$, then $$\tau^{*}=\beta(\mathcal{I},\tau)=\{V\setminus I:V\in \tau, I\in \mathcal{I}\}.$$

We will illustrate the construction of $\mathcal{I}_A$ with the following example.

 \begin{ex}
 Observe the set of real numbers $\mathbb{R}$ with the right-ray  topology, that is,  $\tau=\{(a,+\infty):a\in\mathbb{R}\}\cup\{\emptyset,\mathbb{R}\}.$ Take $\mathbb{N}$ to be the set of positive integers. Clearly, $\mathbb{N}$ is dense in $\tau$. Since $\mathbb{N}\setminus \Int(\mathbb{N})=\mathbb{N}$, let $U_n=(n-1,\infty)$. The ideal $\mathcal{I}_{\mathbb{N}}$ generated with $\mathbb{N}$ consists of sets from the family $\{(n-1,+\infty)\setminus \mathbb{N}:n\in \mathbb{N}\}$ and their subsets and finite unions. It is easy to see that $\mathcal{I}_{\mathbb{N}}=P((0,\infty) \setminus \mathbb{N})$.
 Since sets $V \setminus I$, where $V\in \tau$ and $I \in \mathcal{I}_\mathbb{N}$ are sets from the base of $\tau^*$, it is easy to see that $(0,\infty) \setminus ((0,\infty) \setminus \mathbb{N})=\mathbb{N}$ is open in $\tau^*$.

\end{ex}

We will note the following lemma, necessary for the proof of our next result.

\begin{lem}\label{L5} Assume $(X,\tau,\mathcal{I}_{A})$ is an ideal topological space, with $A\subseteq X$ and  $x_{1},x_{2}\in A\setminus \Int(A)$, $U_{x_{1}}\in\tau(x_{1})$, $U_{x_{2}}\in\tau(x_{2})$. If $\Int(U_{x_{1}}\setminus A)=\emptyset$  and $\Int(U_{x_{2}}\setminus A)=\emptyset$, then $
\Int((U_{x_{1}}\setminus A)\cup (U_{x_{2}}\setminus A))=\emptyset$.
\end{lem}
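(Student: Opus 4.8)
The plan is to argue by contradiction, reducing everything to the openness of the sets $U_{x_1}$ and $U_{x_2}$. First I would rewrite the union: since $U_{x_i}\setminus A = U_{x_i}\cap(X\setminus A)$, we have $(U_{x_1}\setminus A)\cup(U_{x_2}\setminus A) = (U_{x_1}\cup U_{x_2})\setminus A$. Suppose, towards a contradiction, that this set has nonempty interior, and fix a nonempty open set $W$ with $W\subseteq (U_{x_1}\cup U_{x_2})\setminus A$. In particular $W\cap A=\emptyset$ (so $W\subseteq X\setminus A$) and $W\subseteq U_{x_1}\cup U_{x_2}$.

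The key step is to localize $W$ to each $U_{x_i}$. Consider $W\cap U_{x_1}$: it is open, it lies in $U_{x_1}$, and since $W\subseteq X\setminus A$ it lies in $X\setminus A$; hence $W\cap U_{x_1}\subseteq U_{x_1}\setminus A$. As $W\cap U_{x_1}$ is an open subset of $U_{x_1}\setminus A$ and $\Int(U_{x_1}\setminus A)=\emptyset$ by hypothesis, we conclude $W\cap U_{x_1}=\emptyset$. The identical argument, now using $\Int(U_{x_2}\setminus A)=\emptyset$, gives $W\cap U_{x_2}=\emptyset$. Therefore $W = W\cap(U_{x_1}\cup U_{x_2}) = (W\cap U_{x_1})\cup(W\cap U_{x_2}) = \emptyset$, contradicting the choice of $W$ as nonempty. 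This forces $\Int((U_{x_1}\setminus A)\cup(U_{x_2}\setminus A))=\emptyset$.

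The point worth flagging is that the analogous assertion for \emph{arbitrary} sets with empty interior is false (for instance the rationals and the irrationals in $\mathbb{R}$ each have empty interior, yet their union is all of $\mathbb{R}$), so the main conceptual obstacle is identifying exactly which feature of these sets makes the lemma true. What rescues it is the special form of the sets: each is an open set $U_{x_i}$ with $A$ removed, so intersecting a would-be interior witness $W$ with the open part $U_{x_i}$ keeps us inside $U_{x_i}\setminus A$ while preserving openness. Consequently the hypotheses that $x_i\in A\setminus\Int(A)$ and the ambient ideal structure are inessential for this lemma; the statement is a purely topological fact about finite unions of sets of the form (open set)$\,\setminus A$, and the same covering argument extends verbatim to any finite family of such sets.
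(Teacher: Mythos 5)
Your proof is correct and follows essentially the same route as the paper's: take a nonempty open witness $W$ inside the union, observe it is disjoint from $A$, and intersect it with each open $U_{x_i}$ to produce an open subset of $U_{x_i}\setminus A$, which must be empty by hypothesis. If anything, your symmetric treatment of both indices is slightly cleaner than the paper's ``without loss of generality'' step, and your closing remark correctly identifies that the special form (open set)$\setminus A$ is what makes the lemma true and extends to arbitrary finite unions, exactly as the paper notes after its proof.
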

\begin{proof}
	If we take a nonempty open set $O$ in $\tau$ such that $O\subseteq (U_{x_{1}}\setminus A)\cup (U_{x_{2}}\setminus A)$, then $O$ is an open set in $\tau$ disjoint with $A$. Also, we have either $O\cap (U_{x_{1}}\setminus A)\neq \emptyset$ or $O\cap (U_{x_{2}}\setminus A)\neq \emptyset$. Suppose, without loss of generality, that $O\cap (U_{x_{1}}\setminus A)\neq \emptyset$. Then $\emptyset=O \setminus A \supseteq (O\cap U_{x_1}) \setminus A=O\cap (U_{x_1} \setminus A)\neq \emptyset$. A contradiction.
\end{proof}

Let us note that an elementary proof by induction will show that this lemma can be applied for all such  finite unions.

\begin{ex}
	
	In general, if the interior of two sets is empty, the interior of their union need not also be empty. For example, take the set of real numbers $\mathbb{R}$ with the usual topology. The set of rational numbers $\mathbb{Q}$ has an empty interior in $\mathbb{R}$, so does the set of irrational numbers, $\mathbb{R}\setminus \mathbb{Q}$, but $\Int(\mathbb{Q}\cup (\mathbb{R}\setminus \mathbb{Q}))=\mathbb{R}$.
	
\end{ex}

The next Lemma directly follows from Theorem 6.1. in \cite{JH}. We will  give an alternate proof using Lemma \ref{L5}.

\begin{lem}\label{L1}
	Let $(X,\tau,\mathcal{I}_A)$ be an ideal topological space.  Then $\tau\cap \mathcal{I}_A=\{\emptyset\}$ if and only if for every $x\in A\setminus \Int(A)$, we have  $\Int(U_x\setminus A)=\emptyset$.
\end{lem}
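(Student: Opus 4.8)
The plan is to prove the two implications separately. The only structural fact about $\mathcal{I}_A$ that I will need is that, by construction, every member of $\mathcal{I}_A$ is a subset of some finite union $\bigcup_{i=1}^{n}(U_{x_i}\setminus A)$ of generators, with each $x_i\in A\setminus\Int(A)$. I would record this at the outset, since both directions hinge on it.

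For the direction $(\Leftarrow)$, assume $\Int(U_x\setminus A)=\emptyset$ for every $x\in A\setminus\Int(A)$ and take an arbitrary $O\in\tau\cap\mathcal{I}_A$. Because $O\in\mathcal{I}_A$, there are points $x_1,\dots,x_n\in A\setminus\Int(A)$ with $O\subseteq\bigcup_{i=1}^{n}(U_{x_i}\setminus A)$. Invoking the finite-union extension of Lemma \ref{L5} (the remark immediately following its proof), I get $\Int\bigl(\bigcup_{i=1}^{n}(U_{x_i}\setminus A)\bigr)=\emptyset$. Since $O$ is open and contained in this union, $O\subseteq\Int\bigl(\bigcup_{i=1}^{n}(U_{x_i}\setminus A)\bigr)=\emptyset$, whence $O=\emptyset$. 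This shows $\tau\cap\mathcal{I}_A=\{\emptyset\}$.

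For the direction $(\Rightarrow)$, I would argue by contraposition. Suppose some $x\in A\setminus\Int(A)$ has $\Int(U_x\setminus A)\neq\emptyset$. Then $\Int(U_x\setminus A)$ is a nonempty open set contained in the generator $U_x\setminus A\in\mathcal{I}_A$, so by closure of the ideal under subsets it lies in $\mathcal{I}_A$. Thus $\Int(U_x\setminus A)$ is a nonempty element of $\tau\cap\mathcal{I}_A$, contradicting $\tau\cap\mathcal{I}_A=\{\emptyset\}$.

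I expect no genuine obstacle here; the content is essentially bookkeeping. The two things to be careful about are that the induction promoting Lemma \ref{L5} from two to $n$ generators is actually available, and that a generic element of $\mathcal{I}_A$ really is dominated by a finite union of the distinguished sets $U_{x_i}\setminus A$ (rather than arbitrary subsets sneaking in). Both are immediate from the definition of the ideal generated by $\{U_x\setminus A:x\in A\setminus\Int(A)\}$, so the proof should go through cleanly.
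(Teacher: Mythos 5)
Your proposal is correct and follows essentially the same route as the paper: the forward direction observes that $\Int(U_x\setminus A)$ is an open member of the ideal and hence empty, and the backward direction writes an open $O\in\mathcal{I}_A$ as a subset of a finite union of generators and applies the finite-union extension of Lemma~\ref{L5}. The only cosmetic difference is that you phrase the forward implication contrapositively.
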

\begin{proof}
	Assume $\tau\cap \mathcal{I}_A=\{\emptyset\}$. Since  for all   $x\in A\setminus \Int(A)$  we have $\Int(U_{x}\setminus A)\subseteq U_{x}\setminus A\in\mathcal{I}_A$, we have   $\Int(U_{x}\setminus A)\in \tau \cap \mathcal{I}_A$, implying $\Int(U_{x}\setminus A)=\emptyset$.
	
	Now assume  that, for every $x\in A\setminus \Int(A)$, we have  $\Int(U_x\setminus A)=\emptyset$. Take $O\in\tau\cap \mathcal{I}_A$. Then, since $O$ is in the ideal $\mathcal{I}_A$, it is a subset of finite unions of the sets of the form $U_{x}\setminus A$. So we have $O\subseteq \bigcup_{i=1}^{k}(U_{x_{i}}\setminus A)$, where $U_{x_{i}}\in\tau(x_{i})$, $x_{i}\in A\setminus \Int(A),i=1,2,\cdots, k$. Since $O$ is open, we have $O\subseteq \Int(\bigcup_{i=1}^{k}(U_{x_{i}}\setminus A))$ and by Lemma \ref{L5},  $O $ has to be an empty set.
	
\end{proof}

By example, we will show that in some instances  $\tau \cap \mathcal{I}_{A}= \{\emptyset\}$ can fail.

\begin{ex}
    Let us consider the topological space $(\mathbb{R},\tau)$, where $\mathbb{R}$ is the set of real numbers and $\tau$ is the usual topology on $\mathbb{R}$ generated by intervals. Assuming $A=[0,1]$, we will generate $\mathcal{I}_A$ with the family of sets $\{U_x\setminus [0,1]:x\in  [0,1]\setminus (0,1)\}=\{U_x\setminus [0,1]:x\in \{0,1\}\}$, where  $U_x$ is an assigned arbitrary neighbourhood of $x$. Clearly, now we can see that there are sets in $\tau$ which are also in the ideal $\mathcal{I}_A$. For example, if we take for  $x=0$ a neighbourhood  $U_x=(-\varepsilon,\varepsilon)$, where $\varepsilon<1$, the set $(-\varepsilon,0)$ is open and will be in $\mathcal{I}_A$.
\end{ex}

In order to obtain as less as we can new open sets, the idea is to take a smaller ideal. Our next result will illustrate how assuming a certain separation axiom on $(X,\tau)$ can effect the resulting ideal topological space.

\begin{thm}
	Let  $(X,\tau)$ be a $T_{1}$-space. If $A \not \in \tau$, then there is no minimal ideal which generates in the ideal topological space $(X,\tau,\mathcal{I}_{A})$ topology $\tau^*$ which contains $A$.
\end{thm}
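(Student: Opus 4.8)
The plan is to show that \emph{every} ideal on $X$ for which $A$ becomes open in $\tau^*$ properly contains a smaller ideal with the same property; consequently no such ideal can be minimal. I would first reduce the openness of $A$ to a pointwise witnessing condition. By the characterisation recalled in the Preliminaries (a set $U$ is open in $\tau^*$ iff $U \subseteq X \setminus (X\setminus U)^*$), the set $A$ is open in the topology generated by an ideal $\mathcal{I}$ precisely when $(X\setminus A)^* \subseteq X\setminus A$, that is, when for every $x \in A$ there is some $U \in \tau(x)$ with $U \setminus A \in \mathcal{I}$. For $x \in \Int(A)$ this holds automatically (take $U \subseteq A$), so the only genuine constraints sit at the boundary points $x \in A \setminus \Int(A)$; this set is nonempty exactly because $A \notin \tau$.

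Next I would fix an arbitrary ideal $\mathcal{I}$ making $A$ open, choose a boundary point $x_0 \in A \setminus \Int(A)$, and take a witnessing neighbourhood $U_{x_0} \in \tau(x_0)$ with $U_{x_0}\setminus A \in \mathcal{I}$. Since $x_0 \notin \Int(A)$ we have $U_{x_0}\setminus A \neq \emptyset$, so we may pick a point $q \in U_{x_0}\setminus A$; note that $q \neq x_0$ because $q \notin A$ while $x_0 \in A$. I would then set $\mathcal{I}' = \{I \in \mathcal{I} : q \notin I\}$. This is again an ideal (it is closed under subsets and, since a finite union omits $q$ whenever each summand does, under finite unions), it is contained in $\mathcal{I}$, and the containment is proper because $U_{x_0}\setminus A \in \mathcal{I} \setminus \mathcal{I}'$.

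It then remains to check that $\mathcal{I}'$ still makes $A$ open, and this is the step where the $T_1$ hypothesis is indispensable. Given any $x \in A \setminus \Int(A)$, take its witness $U \in \tau(x)$ with $U\setminus A \in \mathcal{I}$. Because $q \notin A$ and $x \in A$ we have $q \neq x$, and $T_1$-ness makes $\{q\}$ closed, so $U \setminus \{q\} = U \cap (X\setminus\{q\})$ is an open neighbourhood of $x$. Its trace $(U\setminus\{q\})\setminus A = (U\setminus A)\setminus\{q\}$ is a subset of $U\setminus A \in \mathcal{I}$ that omits $q$, hence lies in $\mathcal{I}'$. Thus every boundary point is witnessed by $\mathcal{I}'$, so $A$ is open in the topology generated by $\mathcal{I}'$. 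Since $\mathcal{I}$ was arbitrary and we produced a strictly smaller working ideal, no minimal ideal can exist.

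The conceptual heart --- and the only place any real obstacle appears --- is the deletion step: shrinking a neighbourhood by a single point while keeping it open. This is precisely what $T_1$ buys us (closedness of the singleton $\{q\}$), and it is also the reason $q$ must be chosen outside $A$, which guarantees $q \neq x$ for every relevant boundary point $x$. Everything else (that $\mathcal{I}'$ is an ideal, properly smaller, and again witnessing) is routine verification. For completeness one should also observe that the trivial ideal $\{\emptyset\}$ fails to make $A$ open when $A \notin \tau$, so the descending construction never stalls at the bottom.
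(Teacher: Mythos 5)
Your proof is correct and rests on the same key idea as the paper's: use the $T_1$ axiom to delete a single point $q\in U_{x_0}\setminus A$ and thereby produce a strictly smaller ideal that still witnesses the openness of $A$ in $\tau^{*}$. The only difference is one of packaging --- the paper shrinks the van Douwen neighbourhood assignment and regenerates the ideal, whereas you restrict an arbitrary ideal to the subfamily of sets omitting $q$; this makes your argument apply to \emph{every} ideal rendering $A$ open rather than only to those of the form $\mathcal{I}_{A}$, a mild strengthening of the stated result.
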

\begin{proof}
	If $(X,\tau,\mathcal{I}_{A}) $ is given, we will show that there is always an ideal $\mathcal{I}^{'}_A \subsetneq \mathcal{I}_A$ obtained in the same way using different van Douwen assignment.

	Take an arbitrary $y\in U_{x_0}\setminus A$, where $x_0$ is an arbitrary element of $A\setminus \Int(A)$. Such $y$ always exists, since, otherwise, $x_0$ would be in $\Int(A)$. So, $\{y\}\in \mathcal{I}_{A}$. Since $X$ is a $T_{1}$-space, $\{y\}$ is a closed set, so $\widetilde{U_{x}}=U_{x}\setminus \{y\}$ is an open neighbourhood of a point $x$ from $A\setminus \Int(A)$ which does not contain $y$. Let $\widetilde{\mathcal{I}_A}$ be the ideal generated by the family $\{\widetilde{U_{x}}\setminus A:x\in A\setminus \Int(A)\}$.  By construction, $y\notin I$ for all $I\in \widetilde{\mathcal{I}_A}$ and $\widetilde{\mathcal{I}_A}\subsetneq \mathcal{I}_{A}$. In this way we found an ideal "strictly smaller" than $\mathcal{I}_{A}$ such that the  ideal topological space $(X,\tau,\widetilde{\mathcal{I}_A}\rangle$ generates a topology in which $A$ is open.
\end{proof}

\section{Preserving regular open sets and connectivity}

 Obviously, $\tau^{*}$ is an extension of the topology $\tau$ on $X$.  The next proposition derives from the example of Mioduszewski and Rudolf in \cite{MR}, in which they show that every extension with a dense set preserves regular open sets.

To prove that, we need a following theorem.

\begin{thm}\cite[Th. 6.4]{JH} \label{T64JH} If $\tau \cap \mathcal{I}=\{\emptyset\}$ then $\tau_s=(\tau^{*})_s$.
\end{thm}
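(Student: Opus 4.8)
The plan is to prove the stronger statement that $\tau$ and $\tau^{*}$ have \emph{exactly the same} regular open sets. Writing $\Int^{*}$ for the $\tau^{*}$-interior and recalling from the Preliminaries that the regular open sets form a base for the semiregularization, equality of the two families $RO(\tau)$ and $RO(\tau^{*})$ of regular open sets immediately forces $\tau_{s}=(\tau^{*})_{s}$. So the whole proof reduces to the double inclusion $RO(\tau)\subseteq RO(\tau^{*})$ and $RO(\tau^{*})\subseteq RO(\tau)$.

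The decisive preliminary step, and the one I expect to be the main obstacle, is to upgrade the hypothesis $\tau\cap\mathcal{I}=\{\emptyset\}$ to $\tau^{*}\cap\mathcal{I}=\{\emptyset\}$. Indeed, if some nonempty $H\in\tau^{*}$ lay in $\mathcal{I}$, then $H$ would contain a nonempty basic set $V\setminus I$ with $V\in\tau$ and $I\in\mathcal{I}$; downward closure of $\mathcal{I}$ gives $V\setminus I\in\mathcal{I}$, and hence $V=(V\setminus I)\cup(V\cap I)\in\mathcal{I}$, contradicting $\tau\cap\mathcal{I}=\{\emptyset\}$. This is the crux because the reverse inclusion between the two families of regular open sets genuinely needs the ideal to be trivial against the finer topology, not merely against $\tau$.

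With this in hand I would establish two interlocking lemmas. Closure lemma: for every $G\in\tau^{*}$ one has $\Cl^{*}(G)=\Cl(G)$. The inclusion $\Cl^{*}(G)\subseteq\Cl(G)$ is automatic; for the reverse, a point $x\in\Cl(G)\setminus G$ has every $\tau$-neighbourhood $O$ meeting $G$, and then $O\cap G$ is a nonempty $\tau^{*}$-open set, so $O\cap G\notin\mathcal{I}$ by the previous step, whence $x\in G^{*}$. Interior lemma: for every $\tau$-closed set $C$ one has $\Int^{*}(C)=\Int(C)$; here $\supseteq$ is clear, and for $\subseteq$ a basic $\tau^{*}$-neighbourhood $V\setminus I\subseteq C$ of $x$ forces the $\tau$-open set $V\setminus C$ into $\mathcal{I}$, so $V\setminus C=\emptyset$ and $x\in\Int(C)$.

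Combining the lemmas, applying the interior lemma to the $\tau$-closed set $\Cl(G)$, yields for every $G\in\tau^{*}$ the identity
\[
\Int^{*}(\Cl^{*}(G))=\Int^{*}(\Cl(G))=\Int(\Cl(G)).
\]
Since $\Int(\Cl(G))$ is regular open in $\tau$ for any $G$ (a standard fact), every $W\in RO(\tau^{*})$, being $W=\Int^{*}(\Cl^{*}(W))$, equals $\Int(\Cl(W))\in RO(\tau)$; conversely, for $W\in RO(\tau)\subseteq\tau\subseteq\tau^{*}$ the same identity gives $\Int^{*}(\Cl^{*}(W))=\Int(\Cl(W))=W$, so $W\in RO(\tau^{*})$. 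Hence $RO(\tau)=RO(\tau^{*})$, and $\tau_{s}=(\tau^{*})_{s}$ follows. The only routine points I would omit are the verification that $\Int\Cl$ applied to any set produces a regular open set and that coinciding bases generate the same topology.
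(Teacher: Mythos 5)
Your proof is correct, but note that the paper itself offers no proof of this statement: it is imported verbatim as Theorem 6.4 of Jankovi\'c and Hamlett \cite{JH}, so there is no internal argument to compare against. Your argument is self-contained and sound. The key upgrade from $\tau\cap\mathcal{I}=\{\emptyset\}$ to $\tau^{*}\cap\mathcal{I}=\{\emptyset\}$ is handled correctly via the basis $\beta(\mathcal{I},\tau)$ and the decomposition $V=(V\setminus I)\cup(V\cap I)$; the closure lemma correctly uses the fact that $O\cap G$ is a nonempty $\tau^{*}$-open set and hence not in $\mathcal{I}$, which by the definition of the local function puts $x$ in $G^{*}\subseteq\Cl^{*}(G)$; and the interior lemma correctly exploits that $V\setminus C$ is $\tau$-open when $C$ is $\tau$-closed, so the hypothesis forces it to be empty. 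The chain $\Int^{*}(\Cl^{*}(G))=\Int^{*}(\Cl(G))=\Int(\Cl(G))$ then gives $RO(\tau)=RO(\tau^{*})$ and hence equal semiregularizations. This is essentially the same route Jankovi\'c and Hamlett take in the cited source (they likewise reduce to showing that $\tau$-closures and $\tau^{*}$-closures of $\tau^{*}$-open sets agree when $X=X^{*}$), so your reconstruction is a faithful and complete substitute for the omitted proof.
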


 \begin{prop}\label{prop2}
 	Let $(X,\tau,\mathcal{I}_A)$ be an ideal topological space  If $A\subseteq X$ is a dense set in $\tau$, then regular open sets in $\tau$ are also regular open in $\tau^*$, i.e.\ $\tau_s=(\tau^{*})_s$.
 \end{prop}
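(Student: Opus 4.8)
The plan is to reduce the proposition to the two tools already assembled in the excerpt: Lemma \ref{L1}, which characterizes the condition $\tau\cap\mathcal{I}_A=\{\emptyset\}$ in terms of the neighbourhood assignment, and Theorem \ref{T64JH}, which upgrades that condition to the equality of semiregularizations. Since the displayed conclusion is literally $\tau_s=(\tau^{*})_s$, the entire task is to verify the hypothesis $\tau\cap\mathcal{I}_A=\{\emptyset\}$ of Theorem \ref{T64JH}; everything after that is a direct invocation.

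First I would unpack what density of $A$ buys us. To say $A$ is dense in $\tau$ is to say $\Cl(A)=X$, which is equivalent to $\Int(X\setminus A)=\emptyset$: the complement of a dense set has empty interior. This is the one place where the hypothesis of the proposition actually enters, so I would state it explicitly.

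Next I would feed this into Lemma \ref{L1}. For each $x\in A\setminus\Int(A)$ and each chosen neighbourhood $U_x\in\tau(x)$, the generating set satisfies $U_x\setminus A\subseteq X\setminus A$, and interior is monotone, so $\Int(U_x\setminus A)\subseteq\Int(X\setminus A)=\emptyset$. Thus $\Int(U_x\setminus A)=\emptyset$ for every such $x$, which is exactly the right-hand condition in Lemma \ref{L1}. By that lemma we conclude $\tau\cap\mathcal{I}_A=\{\emptyset\}$. (One could equally argue directly: a nonempty $O\in\tau$ with $O\subseteq U_x\setminus A$ would be disjoint from $A$, contradicting density; but routing through Lemma \ref{L1} keeps the argument aligned with the paper's machinery.)

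Finally, with $\tau\cap\mathcal{I}_A=\{\emptyset\}$ established, Theorem \ref{T64JH} applies verbatim with $\mathcal{I}=\mathcal{I}_A$ and yields $\tau_s=(\tau^{*})_s$, which in turn says the regular open sets of $\tau$ and of $\tau^{*}$ coincide, completing the proof. I do not anticipate a genuine obstacle here: the construction of $\mathcal{I}_A$ from $A\setminus\Int(A)$ and the characterization in Lemma \ref{L1} were evidently arranged so that density slots in cleanly. The only point demanding a moment's care is recognizing that density must be translated into the empty-interior statement $\Int(X\setminus A)=\emptyset$ rather than used in its closure form, since it is the interiors of the generators $U_x\setminus A$ that Lemma \ref{L1} controls.
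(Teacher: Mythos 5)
Your proposal is correct and follows essentially the same route as the paper: both translate density of $A$ into the statement that $\Int(U_x\setminus A)=\emptyset$ for the generators, invoke Lemma \ref{L1} to get $\tau\cap\mathcal{I}_A=\{\emptyset\}$, and finish with Theorem \ref{T64JH}. Your version is slightly more explicit about why the interiors vanish (via $\Int(X\setminus A)=\emptyset$ and monotonicity), but the substance is identical.
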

 \begin{proof}
 	For an arbitrary $O \in \tau$, $\Int(O\setminus A)$ is an empty set, otherwise, $A$ would not be dense in $\tau$. Therefore, by Lemma \ref{L1} we have $\tau \cap \mathcal{I}_A =\{\emptyset\}$, which, together with Theorem \ref{T64JH}, proves the statement.
 \end{proof}

 Note that the condition in the previous proposition need not to be necessary for the preservation of regular open sets, as shown by the following example.
 \begin{ex}
 	Observe the space $X=(2,3)\cup (4,5)$ with the usual open interval topology $\tau$ and let $ A=(2,3)\cap \mathbb{Q}$, where $\mathbb{Q}$ is the set of rational numbers. Obviously, $A$ is not dense in $X$, but regular open sets in $\tau$ are regular open in $\tau^{*}$.
 \end{ex}

Now we will study the connectedness of the topological space $(X,\tau^{*})$ generated by $\mathcal{I}_A$. In this context, the set $A$ is necessarily a preopen set, as we show in the following proposition.

\begin{prop}\label{prop3}
	If $(X,\tau,\mathcal{I}_{A})$ is an ideal topological space and $A\subseteq X$ is not preopen in $\tau$, then $\tau^{*}$ is not connected.
\end{prop}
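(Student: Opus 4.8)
The plan is to disconnect $(X,\tau^{*})$ by exhibiting a single nonempty proper subset of $X$ that is simultaneously open and closed in $\tau^{*}$. The mechanism I would exploit is the observation recorded in the Preliminaries, namely that every member of the ideal is closed in $\tau^{*}$, while of course every member of $\tau$ is open in $\tau^{*}$. Consequently, any set that is at once $\tau$-open and an element of $\mathcal{I}_{A}$ is automatically clopen in $\tau^{*}$; if it is also nonempty and proper, it yields the desired separation. So the whole argument reduces to manufacturing one nonempty proper $\tau$-open set lying inside $\mathcal{I}_{A}$, and it is precisely the failure of preopenness that supplies it. Note that $A$ itself is not a candidate for the clopen set, since $A$ need not be closed in $\tau^{*}$.

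To produce the set, I would start from the hypothesis $A\not\subseteq\Int(\Cl(A))$ and fix a witness $a\in A\setminus\Int(\Cl(A))$. Since $A\subseteq\Cl(A)$ gives $\Int(A)\subseteq\Int(\Cl(A))$, this $a$ lies in $A\setminus\Int(A)$, so it is one of the points carrying a neighbourhood assignment $U_{a}\in\tau(a)$ with $U_{a}\setminus A\in\mathcal{I}_{A}$. I would then set
$$O=U_{a}\setminus\Cl(A)=U_{a}\cap\bigl(X\setminus\Cl(A)\bigr),$$
which is $\tau$-open as an intersection of two $\tau$-open sets. Because $\Cl(A)\supseteq A$, we have $O\subseteq U_{a}\setminus A\in\mathcal{I}_{A}$, whence $O\in\mathcal{I}_{A}$ and so $O$ is closed in $\tau^{*}$; being $\tau$-open, it is also open in $\tau^{*}$, hence clopen.

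It then remains only to check that $O$ is a genuine separator, i.e.\ that $\emptyset\neq O\neq X$. This is where the choice of $a$ pays off: $a\notin\Int(\Cl(A))$ means that no neighbourhood of $a$ is contained in $\Cl(A)$, so in particular $U_{a}\not\subseteq\Cl(A)$ and therefore $O\neq\emptyset$. On the other side, $O\subseteq X\setminus\Cl(A)$ while $a\in A\subseteq\Cl(A)$ forces $\Cl(A)\neq\emptyset$, so $O\neq X$. Thus $\{O,\,X\setminus O\}$ is a partition of $X$ into two nonempty disjoint $\tau^{*}$-open sets, and $(X,\tau^{*})$ is disconnected. (In passing, this $O$ also witnesses $\tau\cap\mathcal{I}_{A}\neq\{\emptyset\}$, consistently with Lemma \ref{L1}.) The only real subtlety I anticipate is the first, conceptual step, recognising that clopenness should be sought through ideal membership rather than through $A$ directly; once that is in place, every verification is immediate.
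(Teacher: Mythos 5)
Your proof is correct and follows essentially the same route as the paper: both take a witness $x\in A\setminus\Int(\Cl(A))$, observe that $U_x\setminus\Cl(A)$ is a nonempty $\tau$-open set contained in $U_x\setminus A\in\mathcal{I}_A$, and conclude it is clopen in $\tau^{*}$ because members of the ideal are $\tau^{*}$-closed. Your write-up is in fact slightly more careful than the paper's, since you explicitly verify that the witness lies in $A\setminus\Int(A)$ (so that it carries a neighbourhood assignment) and that the clopen set is proper.
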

\begin{proof}
	Assume that $A$ is not preopen in $X$. Then, there exists $x \in A\setminus (\Int(\Cl(A)))$. So, $U_x \setminus \Cl(A)  \neq \emptyset$, and, since  $U_x \setminus A \in  \mathcal{I}_{A}$, we have $U_x \setminus \Cl(A) \in \tau \cap \mathcal{I}_{A}$. By Lemma 2.7 in \cite{JH}, each set in ideal is closed  in $\tau^{*}$, so $U_x \setminus \Cl(A)$ is nonempty clopen set in $\tau^{*}$. That means that $(X,\tau^{*})$ is not connected.
\end{proof}

\begin{rem}\label{R1}
	Also, note that if $\tau\cap \mathcal{I}_{A}\neq \{\emptyset\}$, then $(X,\tau^{*})$ is also not connected, since there is a set $B$ open in $\tau$ (and therefore in $\tau^{*}$) which is at the same time $\tau^{*}$-closed, because $B\in\mathcal{I}_{A}$.
\end{rem}

Let us now modify the ideal $\mathcal{I}_A$ as follows.

\begin{defn}
	Define the ideal $\mathcal{I}^{'}_A$ as the ideal generated with the family of sets $\{U_x\setminus A:x\in A\}$, by making van Douwen assignment in the following way:
\begin{enumerate} \item if exists, let $U_x$ be a minimal  neighbourhood of $x\in X$;
\item if not, then we take, if possible $U_x\in\tau(x)$ such that $U_x\subseteq \Int(\Cl(A))$;
\item if not, then we choose $U_x$ arbitrary.
\end{enumerate}
	\end{defn}

\begin{lem}\label{L2}
	Let $(X,\tau,\mathcal{I}^{'}_{A})$ be an ideal topological space.  Then $\tau \cap \mathcal{I}^{'}_{A}=\{\emptyset\}$ if and only if $A$ is a preopen set.
\end{lem}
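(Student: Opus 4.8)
The plan is to reduce the statement, via Lemma~\ref{L1}, to a condition on the generators $U_x\setminus A$, and then to exploit the special van~Douwen assignment defining $\mathcal{I}'_A$. A typical member of $\mathcal{I}'_A$ is a subset of a finite union $\bigcup_{i=1}^{k}(U_{x_i}\setminus A)$ with $x_i\in A$; hence, arguing exactly as in Lemma~\ref{L1} (whose proof uses only that each $\Int(U_{x_i}\setminus A)=\emptyset$, together with the finite-union version of Lemma~\ref{L5}), the equality $\tau\cap\mathcal{I}'_A=\{\emptyset\}$ is equivalent to $\Int(U_x\setminus A)=\emptyset$ holding for every generator. I would prove the two implications separately, observing in advance that the reverse implication is insensitive to the assignment, while the forward one is exactly where the three-case choice of $U_x$ pays off.

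For the direction ``$\tau\cap\mathcal{I}'_A=\{\emptyset\}\Rightarrow A$ preopen'' I would argue by contraposition, reusing the idea of Proposition~\ref{prop3}. If $A$ is not preopen, pick $x\in A\setminus\Int(\Cl(A))$. Whatever neighbourhood $U_x$ the assignment selects, the fact that $x\notin\Int(\Cl(A))$ forces $U_x\not\subseteq\Cl(A)$, so $U_x\setminus\Cl(A)$ is nonempty; it is open, being $U_x\cap(X\setminus\Cl(A))$, and it lies in $\mathcal{I}'_A$ because $U_x\setminus\Cl(A)\subseteq U_x\setminus A$. Thus $U_x\setminus\Cl(A)$ is a nonempty member of $\tau\cap\mathcal{I}'_A$, contradicting triviality of the intersection. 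This step never uses which of the three cases produced $U_x$.

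For the converse ``$A$ preopen $\Rightarrow\tau\cap\mathcal{I}'_A=\{\emptyset\}$'' I would fix $x\in A$ and show $U_x\subseteq\Cl(A)$ in every case. Since $A$ is preopen, $x\in A\subseteq\Int(\Cl(A))$, so $\Int(\Cl(A))$ is an open neighbourhood of $x$. In case~(1) the minimal neighbourhood is contained in every open neighbourhood of $x$, in particular in $\Int(\Cl(A))\subseteq\Cl(A)$; in case~(2) the choice directly yields $U_x\subseteq\Int(\Cl(A))\subseteq\Cl(A)$; and case~(3) cannot occur, precisely because $\Int(\Cl(A))$ witnesses that case~(2) is always available. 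Once $U_x\subseteq\Cl(A)$, the set $U_x\setminus A$ has empty interior: any nonempty open $O\subseteq U_x\setminus A$ would be disjoint from $A$, whence $O\cap\Cl(A)=\emptyset$, contradicting $\emptyset\neq O\subseteq\Cl(A)$. With $\Int(U_x\setminus A)=\emptyset$ for every generator, the finite-union form of Lemma~\ref{L5} gives $\Int\bigl(\bigcup_{i=1}^{k}(U_{x_i}\setminus A)\bigr)=\emptyset$, so any open member of $\mathcal{I}'_A$ is empty.

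The hard part will be the forward-direction case analysis rather than any computation: one must verify that the minimal neighbourhood of case~(1) is automatically trapped inside $\Int(\Cl(A))$, and that case~(3) is vacuous under preopenness—both hinging on $x\in\Int(\Cl(A))$. A secondary technical point is that the generators of $\mathcal{I}'_A$ are indexed by all of $A$ (including $\Int(A)$), slightly outside the literal hypotheses of Lemmas~\ref{L5} and~\ref{L1}; this is harmless, since those proofs use only that each $U_x\setminus A$ is disjoint from $A$ and has empty interior, and I would remark on this explicitly.
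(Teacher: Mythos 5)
Your proof is correct and follows essentially the same route as the paper's: the forward direction extracts the nonempty open set $U_x\setminus\Cl(A)$ from a witness of non-preopenness, and the reverse direction uses the three-case assignment to trap $U_x$ inside $\Int(\Cl(A))\subseteq\Cl(A)$ and conclude that each generator has empty interior. If anything, your version is slightly more careful than the paper's, since you explicitly route the conclusion through the finite-union form of Lemma~\ref{L5} (the paper tacitly treats a member of $\mathcal{I}'_A$ as sitting inside a single generator) and you note that indexing the generators over all of $A$ rather than $A\setminus\Int(A)$ is harmless.
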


\begin{proof}
	Firstly, we will assume that $\tau \cap \mathcal{I}^{'}_{A}=\{\emptyset\}$ and that $A$ is not a preopen set, i.e. $A\nsubseteq \Int(\Cl(A))$. Therefore, there exists $x\in A$ such that $x\notin \Int(\Cl(A))$, thus for every neighbourhood $V_x$ of $x$ we have $V_x\setminus \Cl(A)\neq \emptyset$ and, consequently, $V_x\setminus A\neq \emptyset$. However, this is in contradiction with Lemma \ref{L1}, since now we have $\Int(V_x\setminus A)\supseteq \Int(V_x\setminus \Cl(A))\neq \emptyset$, which gives  $\Int(U_x\setminus A)\in (\tau\cap \mathcal{I}^{'}_A)\setminus \{\emptyset\}$, where $U_x$ is the van Douwen neighbourhood assignment for $x$. A contradiction.
	
	Now let us assume that $A$ is a preopen set. If $\tau \cap \mathcal{I}^{'}_{A}\neq \{\emptyset\}$, then there exists a non-empty set $V\in \tau\cap \mathcal{I}^{'}_A$. This also means that there exists $x\in A$ and $U_x\in\tau(x)$ such that $V\subseteq U_x\setminus A$, and therefore $\Int(U_x\setminus A)\neq \emptyset$. If  $U_x$ is chosen as a  minimal neighbourhood of $x$, then, since $A$ is preopen, $A \subseteq \Int(\Cl(A))$, implying $\Int(\Cl(A))$  is an neighbourhood of $A$. So, due to minimality of $U_x$,  $U_x \subseteq \Int(\Cl(A))$. Since $V$ is an open set outside of $A$, then $\Cl(A) \cap V=\emptyset$. So, $V \cap \Int(\Cl(A))=\emptyset$, implying $V \cap U_x=\emptyset$.    This contradicts the fact that $V \subseteq U_x\setminus A \subseteq U_x$. On the other hand, if we can not choose minimal $U_x$, since $A$ is preopen, we can always choose $U_x$ such that  $U_x\subseteq \Int(\Cl(A))$. Then $ V\subseteq U_x \setminus A\subseteq \Int(\Cl(A))\setminus A$. Since $A$ is a preopen set, this means that $\Int(\Int(\Cl(A))\setminus A)=\emptyset$, so $V$ is a subset of the empty set, which leads to a contradiction.
 \end{proof}

So, with this, we can strengthen Proposition \ref{prop2}.

\begin{cor}\label{cor_preopen}
 	Let $(X,\tau,\mathcal{I}_A)$ be an ideal topological space  If $A\subseteq X$ is a preopen  set in $\tau$, then regular open sets in $\tau$ are also regular open in $\tau^*$, i.e.\ $\tau_s=(\tau^{*})_s$.
 \end{cor}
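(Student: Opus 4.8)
The plan is to read this corollary as the exact analogue of Proposition \ref{prop2}, with the density hypothesis relaxed to preopenness and the argument re-routed through the sharper Lemma \ref{L2} in place of Lemma \ref{L1}. Only two external facts are needed: Lemma \ref{L2}, which pins down exactly when the topology and the modified ideal meet solely in $\{\emptyset\}$, and Theorem \ref{T64JH}, which asserts that the semiregularizations coincide precisely under the condition $\tau\cap\mathcal{I}=\{\emptyset\}$. The corollary will then fall out as a short composition of these two.

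First I would note that, since $A$ is preopen, Lemma \ref{L2} applies and yields immediately $\tau\cap\mathcal{I}^{'}_A=\{\emptyset\}$. The point deserving attention here is the choice of ideal: the statement must be understood for the carefully constructed $\mathcal{I}^{'}_A$ coming from the van Douwen assignment of the preceding definition (a minimal neighbourhood where one exists, otherwise a neighbourhood contained in $\Int(\Cl(A))$). This matters because, unlike the dense case of Proposition \ref{prop2}---where $\Int(U_x\setminus A)=\emptyset$ forces $\tau\cap\mathcal{I}_A=\{\emptyset\}$ for \emph{any} assignment---for a merely preopen set an arbitrary assignment can produce $\tau\cap\mathcal{I}_A\supsetneq\{\emptyset\}$, as the earlier $A=[0,1]$ example illustrates. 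Preopenness is exactly the hypothesis guaranteeing that the favourable assignment underlying $\mathcal{I}^{'}_A$ can be made.

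With $\tau\cap\mathcal{I}^{'}_A=\{\emptyset\}$ secured, the conclusion is one step away: applying Theorem \ref{T64JH} with $\mathcal{I}=\mathcal{I}^{'}_A$ gives $\tau_s=(\tau^{*})_s$, which is precisely the assertion that the regular open sets of $\tau$ and of $\tau^{*}$ coincide, so every regular open set of $\tau$ stays regular open in $\tau^{*}$. I do not expect a genuine obstacle, since the substantive work is already packaged inside Lemma \ref{L2}; the only care required is bookkeeping---verifying that the ideal to which Lemma \ref{L2} is applied is indeed the modified one $\mathcal{I}^{'}_A$. Granting that, the corollary is a direct strengthening of Proposition \ref{prop2}, enlarging the hypothesis from ``dense'' to ``preopen'' while preserving the same conclusion.
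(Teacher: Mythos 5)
Your proposal is correct and follows exactly the route the paper intends: Lemma \ref{L2} gives $\tau\cap\mathcal{I}^{'}_A=\{\emptyset\}$ from preopenness, and Theorem \ref{T64JH} then yields $\tau_s=(\tau^{*})_s$. Your observation that the corollary must be read with the modified ideal $\mathcal{I}^{'}_A$ (rather than an arbitrary assignment's $\mathcal{I}_A$, as the statement's notation suggests) is a correct and worthwhile clarification of the paper's wording.
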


The assumption that $A$ is preopen is also relevant  in studying the connectedness of the new topology.

\begin{thm}\label{thm_con_preopen}
	If $\tau$ is connected in the ideal topological space $(X,\tau,\mathcal{I}^{'}_{A})$, then $\tau^{*}$ is connected if and only if $A$ is a preopen set.
\end{thm}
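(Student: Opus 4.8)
The plan is to split the biconditional, treating the direction ``$\tau^{*}$ connected $\Rightarrow A$ preopen'' by contraposition via the earlier Proposition~\ref{prop3}, and the direction ``$A$ preopen $\Rightarrow \tau^{*}$ connected'' by a regular-open / semiregularization argument that transports a disconnection of $\tau^{*}$ back to $\tau$.

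First I would dispose of the ``only if'' direction. Since $\mathcal{I}'_A$ is produced by exactly the neighbourhood-assignment construction that defines $\mathcal{I}_A$ (merely with a particular choice of the $U_x$), it is itself an instance of $\mathcal{I}_A$, so Proposition~\ref{prop3} applies without change: if $A$ is not preopen, then $(X,\tau^{*})$ is disconnected. Contrapositively, connectedness of $\tau^{*}$ forces $A$ to be preopen, and I note that the connectedness of $\tau$ plays no role in this half.

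For the converse, assume $A$ is preopen. By Lemma~\ref{L2} this gives $\tau\cap\mathcal{I}'_A=\{\emptyset\}$, whence by Corollary~\ref{cor_preopen} (equivalently Theorem~\ref{T64JH}) the semiregularizations coincide: $\tau_s=(\tau^{*})_s$. I would then argue by contradiction. Suppose $(X,\tau^{*})$ is disconnected, witnessed by a set $C$ that is nonempty, proper, and clopen in $\tau^{*}$. The key observation is that a clopen set is automatically regular open: since $C$ equals both its own $\tau^{*}$-interior and its own $\tau^{*}$-closure, it coincides with the $\tau^{*}$-interior of its $\tau^{*}$-closure. Thus $C$ is regular open in $\tau^{*}$, hence $C\in(\tau^{*})_s$. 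Using $(\tau^{*})_s=\tau_s\subseteq\tau$, I conclude $C\in\tau$, and applying the same reasoning to the clopen set $X\setminus C$ gives $X\setminus C\in\tau$. Then $C$ and $X\setminus C$ are two nonempty, disjoint, $\tau$-open sets covering $X$, contradicting the connectedness of $\tau$; therefore $\tau^{*}$ is connected.

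The hard part will be the single step in the converse that carries the disconnection from $\tau^{*}$ down to $\tau$. What makes it work is the equality $\tau_s=(\tau^{*})_s$ combined with the fact that a clopen set belongs to the semiregularization, so I would be careful to justify both that regular-open-in-$\tau^{*}$ implies membership in $(\tau^{*})_s$ (the regular open sets form a base of the semiregularization) and that $\tau_s\subseteq\tau$ (the semiregularization is coarser than the topology, being generated by $\tau$-open sets). Everything else is a direct citation of the preceding Lemma~\ref{L2}, Corollary~\ref{cor_preopen}, and Proposition~\ref{prop3}.
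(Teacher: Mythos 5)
Your proof is correct, and while the ``only if'' half coincides with the paper's (both simply invoke Proposition~\ref{prop3}, which indeed applies verbatim to $\mathcal{I}'_A$ since it is a special case of the neighbourhood-assignment construction), your ``if'' half takes a genuinely different route. The paper enlarges $\mathcal{I}'_A$ to the principal ideal $\mathcal{I}^{max}_A$ generated by $\Int(\Cl(A))\setminus A$, uses compatibility of principal ideals to write every open set of the enlarged topology as $V\setminus I$ with $V\in\tau$, $I\in\mathcal{I}^{max}_A$, and then analyses a putative disconnection directly, showing the ideal parts must vanish because $A$ is dense in $\Int(\Cl(A))$. You instead combine Lemma~\ref{L2} with Theorem~\ref{T64JH} to get $\tau_s=(\tau^{*})_s$ and observe that a clopen set is regular open, hence lies in the common semiregularization and so in $\tau$; this transports the disconnection down to $\tau$ in two lines. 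Your argument is shorter, avoids the auxiliary ideal and the compatibility machinery entirely, and isolates the real reason the theorem holds: connectedness is an invariant of the semiregularization, so any expansion with $\tau_s=(\tau^{*})_s$ (here guaranteed by preopenness via Lemma~\ref{L2}) preserves it. What the paper's longer computation buys in exchange is an explicit description of the clopen sets of $\tau^{*}_{max}$, but for the statement at hand your route is cleaner; the two small justifications you flag (regular open sets of $\tau^{*}$ belong to $(\tau^{*})_s$, and $\tau_s\subseteq\tau$) are both immediate and correctly identified as the only points needing care.
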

\begin{proof}
	 If $\tau^{*} $  is connected,  from Proposition \ref{prop3}  we have that the set $A$ is preopen.\\
	
	On the other hand, let $A$ be a preopen set and assume $\tau$ is a connected topology, but $\tau^{*}$ is not. We well consider specially obtained topology $\tau_{max}^{*}$, obtained by van Douwen neighbourhood system such that $U^{max}_x=\Int(\Cl(A))$. Since $U_x  \subseteq U^{max}_x$,  we have $\mathcal{I}^{'}_{A} \subseteq \mathcal{I}^{max}_{A}$. Therefore $\tau^{*}\subseteq\tau_{max}^{*}$. So $\langle X, \tau_{max}^{*}\rangle$ is also disconnected space. But we have more. Since $\mathcal{I}^{max}_{A}$ is a principal ideal, we have compatibility of the topology and the ideal, i.e.\  $\tau \sim \mathcal{I}^{max}_{A}$.  This gives that $\beta(\mathcal{I}^{max}_{A}, \tau)=\tau_{max}^{*}$. So, there exists a set $B$  in $\tau_{max}^{*}$, but not in $\tau$ such that $X\setminus B\in\tau_{max}^{*}$. So, there exist $V_1, V_2\in \tau$ and $I_1,I_2\in\mathcal{I}^{max}_A$ so that $B=V_1\setminus I_1$ and $X\setminus B=V_2\setminus I_2$. So, $X=(X \setminus V_1) \cup (X \setminus V_2) \cup (I_1 \cup I_2)$, and those three sets are disjoint.  This gives that $I_1 \cup I_2$ is open, but, $I_1 \cup I_2 \subseteq \Int(\Cl(A))\setminus A=\emptyset$. This implies that $I_1 \cup I_2 = \emptyset$, which implies that $B=V_1$ and  $X \setminus B= V_2$, implying $\tau$ is  a disconnected topology. A contradiction.
 \end{proof}

\section{Topology generated with dense sets}

 In the second part of out paper we will demonstrate how using a specific family of dense sets one can generate an ideal topological space with interesting properties.\\

 \begin{defn}
 	In the topological space $(X,\tau)$  let   $\mathcal{D}$ be the family  of dense sets. We define a  subfamily $\mathcal{D}'\subseteq \mathcal{D}$ with the following property: for every finite subfamily $\overline{\mathcal{D}}$ in $\mathcal{D}'$ the intersection $\bigcap \overline{\mathcal{D}}$ is dense in $X$. We will say that $\mathcal{D}'$ has \textbf{the dense finite intersection property}.
 \end{defn}

Observe  that a maximal subfamily $\mathcal{D}'$ with the given property can be taken: for a chain $\mathcal{L}$ in $\mathcal{D}'$, obviously $\bigcup \mathcal{L}$ is a dense set in $X$ and an upper bound for $\mathcal{L}$. Therefore, simply by applying the Zorn's Lemma we get the maximal family with the dense finite intersection property.

However, this maximal subfamily of dense sets  need not be unique, as it is illustrated in the following example using resolvable spaces.

\begin{defn}[\cite{GAN}]
	A topological space $(X,\tau)$ is resolvable if it contains two disjoint dense subsets.
\end{defn}

\begin{ex}
	Let $(X,\tau)$ be a resolvable space with dense, disjoint subsets $A$ and $B$. It is obvious that only one of the sets $A$ and $B$ can be in the maximal family $\mathcal{D}'$ of dense subsets with the dense intersection property. Thus, we will have two maximal families - one containing $A$ and another containing  the set $B$.
\end{ex}

Using the previously described family  of dense sets $\mathcal{D}'$ we will define a new ideal on $X$. Precisely, we will consider the ideal $\mathcal{I}_D$ generated by  the family of sets $\{C: C\subseteq (X\setminus D)\}$ for all $D\in\mathcal{D'}$, that is, $\mathcal{I}_D$ will consist of the sets $\{C: C\subseteq (X\setminus D)\}$ for all $D\in\mathcal{D'}$, their subsets and their finite unions.\\

Unlike in our previous section, the ideal generated in this way will not contain sets open in $(X,\tau)$.

\begin{lem}\label{L3}  If  $(X,\tau)$ is an arbitrary topological space, then $\tau\cap \mathcal{I}_{D}=\{\emptyset\}$.
\end{lem}

\begin{proof}
	It suffices to notice that if $B$ is a non-empty open set, then it has to intersect every dense set, and therefore can not be in the ideal $\mathcal{I}_D$.
\end{proof}

\begin{ex}
	Observe the right-ray topology $\tau $ on the set of real numbers $\mathbb{R}$, that is,  $\tau=\{(a,\infty):a\in\mathbb{R}\}$. The set of natural numbers, $\mathbb{N}$ is dense in $\mathbb{R}$. Consider the ultrafilter generated by the cofinite sets in $\mathbb{N}$, that is, the sets with finite complements. Clearly, all finite intersections of these sets are again dense in $\tau$, so we can take this to be the family y $\mathcal{D}'$ described above.
\end{ex}

Also, topology extension using the ideal $\mathcal{I}_D$ preserves regular open sets.

\begin{prop}
	If $(X,\tau,\mathcal{I}_D)$ is an ideal topological space, then $\tau_s=(\tau^{*})_s$.
\end{prop}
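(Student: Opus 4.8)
The plan is to reduce this proposition to the already-established machinery, since the heavy lifting has been done in Lemma \ref{L3} and Theorem \ref{T64JH}. The statement claims that the semiregularizations of $\tau$ and $\tau^{*}$ coincide, where $\tau^{*}$ is generated by the ideal $\mathcal{I}_D$ built from the dense finite intersection family $\mathcal{D}'$. First I would recall that Theorem \ref{T64JH} gives exactly this conclusion, $\tau_s=(\tau^{*})_s$, under the single hypothesis that $\tau\cap\mathcal{I}=\{\emptyset\}$. So the entire task is to verify that this hypothesis holds for the ideal $\mathcal{I}_D$.

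Fortunately, that verification is precisely the content of Lemma \ref{L3}, which states $\tau\cap\mathcal{I}_D=\{\emptyset\}$ for an arbitrary topological space. Thus the proof is a two-line citation: by Lemma \ref{L3} we have $\tau\cap\mathcal{I}_D=\{\emptyset\}$, and then Theorem \ref{T64JH} applied with $\mathcal{I}=\mathcal{I}_D$ yields $\tau_s=(\tau^{*})_s$ immediately. I would write it exactly this way, mirroring the structure of the proof of Proposition \ref{prop2}, which used the same two ingredients (there the disjointness came via Lemma \ref{L1} instead of Lemma \ref{L3}, but the final appeal to Theorem \ref{T64JH} is identical).

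There is essentially no obstacle here, which is the point worth flagging: this proposition is a corollary-style consequence rather than a result requiring new ideas. The only thing to be careful about is making sure the reader sees \emph{why} $\tau\cap\mathcal{I}_D=\{\emptyset\}$ is the right hypothesis to check, namely that a nonempty $\tau$-open set meets every dense set and so cannot be a subset of any $X\setminus D$ with $D\in\mathcal{D}'$, hence cannot lie in the ideal. Since Lemma \ref{L3} already records this observation, I would simply cite it rather than reprove it, keeping the proof to one sentence invoking Lemma \ref{L3} and Theorem \ref{T64JH} in sequence.
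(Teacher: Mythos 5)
Your proposal is correct and matches the paper's own proof: both invoke Lemma \ref{L3} to get $\tau\cap\mathcal{I}_D=\{\emptyset\}$ and then apply Theorem \ref{T64JH} (Theorem 6.4 of Jankovi\'c--Hamlett) to conclude $\tau_s=(\tau^{*})_s$. The paper additionally remarks on the equivalence with $X=X^{*}$ via Theorem 6.1 of that reference, but this is incidental and your streamlined version loses nothing.
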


\begin{proof}
	By our Lemma \ref{L3} and the result given in Theorem 6.1  in \cite{JH} by Jankovi\'c and Hamlett we see that $\tau\cap\mathcal{I}_D=\{\emptyset\}$ is equivalent to $X=X^{*}$, and from  the Theorem 6.4 in the same paper we have that the semiregularisations of $\tau$ and $\tau^{*}$ are equal.
	
\end{proof}

Thus, semiregularizations of $\tau$ and $\tau^{*}$ are the same. We will see in the sequel that the topology $\tau^{*}$ formed by the ideal $\mathcal{I}_D$ is in fact the r.o. maximal topology on $X$.

\begin{thm} If $(X,\tau,\mathcal{I}_D)$ is an ideal topological space, then $\tau^{*}$ is a r.o. maximal topology on $X$ (i.e.\ it is a submaximal space).
\end{thm}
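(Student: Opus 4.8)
The plan is to use the characterization of submaximality in terms of dense sets. By the facts recalled in the Preliminaries, a space is submaximal if and only if it is r.o. maximal, and by the theorem of Mioduszewski and Rudolf \cite{MR} a space is r.o. maximal if and only if every dense subset is open. Hence it suffices to prove that every set $E$ which is dense in $(X,\tau^{*})$ is in fact open in $\tau^{*}$.

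First I would record two preliminary observations about the family $\mathcal{D}'$ and the ideal $\mathcal{I}_D$. Since $\mathcal{D}'$ is maximal with the dense finite intersection property, it is closed under finite intersections: any finite intersection $D_1 \cap \dots \cap D_n$ of its members is dense, and adjoining it to $\mathcal{D}'$ preserves the property, so by maximality it already lies in $\mathcal{D}'$. Consequently every element of $\mathcal{I}_D$ is contained in $X \setminus D$ for a single $D \in \mathcal{D}'$, and, since all members of an ideal are closed in $\tau^{*}$, each $D \in \mathcal{D}'$ is itself open in $\tau^{*}$.

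Next, take $E$ dense in $\tau^{*}$, and work with the basis $\beta(\mathcal{I}_D,\tau) = \{V \setminus I : V \in \tau, I \in \mathcal{I}_D\}$ of $\tau^{*}$. Then $E$ must meet every nonempty basic open set. Fixing $D \in \mathcal{D}'$ and putting $I = X \setminus D \in \mathcal{I}_D$, the basic set $V \setminus I = V \cap D$ is nonempty for every nonempty $V \in \tau$ because $D$ is dense; thus $E \cap V \cap D \neq \emptyset$ for all nonempty $V \in \tau$, i.e.\ $E \cap D$ is dense in $\tau$. Taking $I = X \setminus (D_1 \cap \dots \cap D_n)$ the same argument shows $E \cap D_1 \cap \dots \cap D_n$ is dense for every finite subfamily of $\mathcal{D}'$, and the case $I = \emptyset$ shows $E$ itself is dense in $\tau$. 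Hence $\mathcal{D}' \cup \{E\}$ again has the dense finite intersection property, and maximality of $\mathcal{D}'$ forces $E \in \mathcal{D}'$.

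Finally, once $E \in \mathcal{D}'$ we have $X \setminus E \in \mathcal{I}_D$, so $X \setminus E$ is closed in $\tau^{*}$ and therefore $E$ is open in $\tau^{*}$, completing the argument. The main obstacle, and the crux of the proof, is the middle step: translating $\tau^{*}$-density of $E$ into density of all the intersections $E \cap D_1 \cap \dots \cap D_n$ so that maximality of $\mathcal{D}'$ can be invoked. This hinges on choosing the right ideal elements $I = X \setminus (D_1 \cap \dots \cap D_n)$ inside the basis $\beta(\mathcal{I}_D,\tau)$, which is precisely where the dense finite intersection property of $\mathcal{D}'$ enters.
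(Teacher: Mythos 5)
Your proof is correct and is essentially the paper's argument run in the contrapositive direction: the paper assumes a $\tau^{*}$-dense $D_0$ is not open, deduces $D_0\notin\mathcal{D}'$, and extracts a nonempty $\tau^{*}$-open set $O\cap D_1$ missing $D_0$, whereas you show directly that all intersections $E\cap D_1\cap\dots\cap D_n$ are $\tau$-dense and invoke maximality to conclude $E\in\mathcal{D}'$. The key ingredients (maximality of $\mathcal{D}'$, $\tau^{*}$-openness of its members via the ideal, and the basis $\beta(\mathcal{I}_D,\tau)$) are the same, so this is the same proof, if anything spelled out more carefully than in the paper.
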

\begin{proof}
	According to \cite[Theorem 1.2]{ARH} it is sufficient to prove that every dense set in $\tau^{*}$ is also open in $\tau^{*}$.  Let  $D_0$ be a  dense set in $\tau^{*}$ and assume it is not open in $\tau^{*}$. Since $D_0$ is dense in $\tau^{*}$ and not in $\mathcal{D}'$, there is a dense set $D_1$ such that $D_0\cap D_1$ is not dense in $X$. Thus there is an open set $O$ such that $O\cap (D_0\cap D_1)=\emptyset$. If we notice that $O\cap D_1$ is open in $\tau^{*}$, we can see that this would mean that $D_0$ is not dense in $\tau^{*}$.
\end{proof}

\section{Remark on maximal connected spaces}

%The following example will show that $\tau^{*}$ generated in this way does not have to  be a maximal connected space.

Unfortunately, using this method of ideals, in most of the cases we will not obtain maximal connected topology. By \cite[Theorem 1]{CL}, space is maximal connected iff it is nearly maximal connected and every dense set is open. The problem is that the property of nearly maximal connectedness only depends on regular-open sets, which implies that if starting topology $\tau$ is not nearly maximal connected, than also obtained $\tau^*$ will not have this property. If we want to add new regular open sets, according to Theorem \ref{cor_preopen}, we will have to add some sets which are not preopen, but, in that case, by Theorem \ref{thm_con_preopen}, we will lose connectivity.

So, to obtain maximal connected space with this method, we have to start with nearly maximal connected space. Example of such space are maximal singular expansions of real line, given in \cite{GSW}. On the other hand (see \cite{CL}), natural topology on the real line is not nearly maximal connected, and therefore, by this method it can not be extended up to the maximal connected topology.

\section{Acknowledgments}
This talk is supported by the Science Fund of the Republic
of Serbia, Grant No. 7750027: Set-theoretic, model-theoretic and Ramsey-
theoretic phenomena in mathematical structures: similarity and diversity –
SMART.

The first author acknowledges financial support of the Ministry of Education, Science and Technological Development of the Republic of
Serbia (Grant No.\ 451-03-68/2020-14/ 200125).

The authors wishes to thank Adam Bartoš for a help provided in understanding nature of nearly maximal and maximal connected spaces.

\end{document}